\numberwithin{equation}{section}
\newtheorem{theorem}[equation]{Theorem}
\newtheorem{lemma}[equation]{Lemma}
\newtheorem{proposition}[equation]{Proposition}
\newtheorem{example}[equation]{Example}
\newcommand{\vect}[1]{\boldsymbol{#1}}
\newcommand{\n}{\mathbf{n}}
\newcommand{\rn}{\mathbf{R}^n}
\newcommand{\Pm}{\mathcal{P}_m}
\newcommand{\hm}{\mathcal{H}_m}
\newcommand{\be}{\partial E}
\renewcommand{\phi}{\varphi}
\newcommand{\Ref}[1]{(\ref{#1})}
\newcommand{\andd}{\quad \text{and} \quad}
\newcounter{subtheoremc}
\newenvironment{subtheorem}{\par\begin{list}{{\normalfont
(\alph{subtheoremc})}\hfill}{\usecounter{subtheoremc}
\setlength{\labelwidth}{.25in}
\setlength{\labelsep}{0pt}
\setlength{\leftmargin}{.25in}
}}{\end{list}}
\DeclareMathOperator{\ran}{range}
\DeclareMathOperator{\Null}{null}
\begin{document}

\title{The Neumann Problem on Ellipsoids}

\author{Sheldon Axler}
\address{Department of Mathematics,
San Francisco State University,
San Francisco, CA 94132 USA}
\email{axler@sfsu.edu}

\author{Peter J. Shin}
\address{Department of Radiology and Biomedical Imaging,
University of California, San Francisco,
San Francisco, CA 94158 USA}
\email{peter.shin@ucsf.edu}

\thanks{Peter J. Shin was supported by National Institutes of Health grant number P41-EB013598.}

\subjclass[2010]{Primary 31B05, 31B20}

\date{1 November 2016}

\maketitle

\begin{abstract}
The Neumann problem on an ellipsoid in $\rn$ asks for a function harmonic inside the ellipsoid whose normal derivative is some specified function on the ellipsoid. We solve this problem when the specified function on the ellipsoid is a normalized polynomial (a polynomial divided by the norm of the normal vector arising from the definition of the ellipsoid). Specifically, we give a necessary and sufficient condition for a solution to exist, and we show that if a solution exists then it is a polynomial whose degree is at most the degree of the polynomial giving the specified function. Furthermore, we give an algorithm for computing this solution. We also solve the corresponding generalized Neumann problem and give an algorithm for computing its solution.
\end{abstract}

\section{Introduction}

Fix a positive integer $n \ge 2$ and positive numbers $\beta_1, \dots, \beta_n$. Let $q$ be the function defined on $\rn$ by
\[
q(x) = q(x_1, \dots, x_n) = \beta_1 {x_1}^2 + \cdots + \beta_n {x_n}^2.
\]
Let $E$ be defined by
\[
E = \{x \in \rn : q(x) < 1\}.
\]
Thus the boundary of $E$, denoted $\partial E$, is the ellipsoid defined by
\[
\be = \{ x \in \rn: q(x) = 1\},
\]
and the closure of $E$, denoted $\bar{E}$, is defined by
$\bar{E} = \{x \in \rn : q(x) \le 1\}$.

Let $\n(x)$ be the outward-pointing unit normal on $\be$ at $x \in \be$. Thus
\[
\n(x) = \frac{\nabla q(x)}{\| \nabla q(x) \|},
\]
where
\[
(\nabla q)(x) = 2( \beta_1 x_1, \dots,  \beta_n x_n).
\]
The outward-pointing normal derivative at $x \in \be$ of a smooth function $h$ on $\bar{E}$, denoted $(D_\n h)(x)$, is given by the formula
\[
D_\n h = \nabla h \cdot \n = \nabla h \cdot \frac{ \nabla q }{\| \nabla q \|},
\]
where each of the functions above should be evaluated at $x \in \be$.

The Neumann problem on the ellipsoid $\be$ asks the following: given a function $u$ on $\be$, find a function $h$ harmonic on $\bar{E}$ such that $D_\n h = u$ on $\be$.

In this paper, we will solve the Neumann problem on the ellipsoid $\be$ when $u$ has the form $\dfrac{f}{\|\nabla q \|}$, where $f$ is a polynomial on $\rn$. Specifically, we will give a necessary and sufficient condition for this Neumann problem to have a solution, and when a solution exists we will show that it is a polynomial on $\rn$ with degree at most the degree of $f$ (Theorem \ref{exist}). Then we will give an algorithm for computing this solution (see Section \ref{algorithm}). This algorithm has been implemented in software, producing some beautiful examples (see Section \ref{examples}). We also solve the corresponding generalized Neumann problem, which instead of asking for $h$ to be harmonic asks for the Laplacian of $h$ to be some specified polynomial (still with $D_\n h = u$ on $\be$).

These results and an efficient algorithm for computing the solution to the Neumann problem with polynomial functions were known in the special case when the ellipsoid $\be$ is a sphere (see \cite{AR}). However, the results on the sphere do not translate to ellipsoids because the composition of a harmonic function with the natural linear map from $\rn$ to $\rn$ that takes a sphere to an ellipsoid is usually not a harmonic function.

The standard Dirichlet problem is closely related to the Neumann problem. On the sphere, the solution to the Dirichlet problem for polynomials leads to the solution to the Neumann problem for polynomials (see \cite{AR}). An algorithm for computing the solution to the Dirichlet problem for polynomials on ellipsoids was presented in \cite{AGV}. However, unlike the case of the sphere, the solution to the Dirichlet problem on ellipsoids does not seem to lead to a solution to the Neumann problem on ellipsoids.

One of the authors of this paper works in a Department of Radiology and Biomedical Imaging, which is an unusual affiliation for an author of a paper in a mathematics journal. The interesting mathematical questions answered in this paper arose from work in his lab with magnetic resonance imaging (MRI) scanners, a widely used tool in medical diagnostics. Section \ref{MRI} explains this connection between MRI and the Neumann problem on ellipsoids.

\section{Normal Derivatives of Polynomials on Ellipsoids}

The next proposition is well known, but we include it here for completeness. This proposition will show that the solution to our Neumann problem, if it exists, is unique except possibly for the addition of a constant function.

Let $dV$ denote the usual volume measure on $\rn$ and let $dA$ denote the usual surface area measure on $\be$.

\begin{proposition} \label{unique}
Suppose $h$ is harmonic on $\bar{E}$ and $\nabla h \cdot \nabla q  = 0$ on $\be$. Then $h$ is a constant function.
\end{proposition}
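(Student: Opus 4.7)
The plan is to use Green's first identity (integration by parts) to convert the hypotheses into an integral identity that forces $\nabla h$ to vanish on $E$.

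First I would observe that since $\n = \nabla q / \|\nabla q\|$ on $\be$, the hypothesis $\nabla h \cdot \nabla q = 0$ on $\be$ is equivalent to $D_\n h = 0$ on $\be$ (note $\|\nabla q\| > 0$ on $\be$ because $\nabla q(x) = 0$ only at the origin, which lies in $E$, not on $\be$). Thus $h$ is a harmonic function on $\bar E$ whose normal derivative vanishes on $\be$.

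Next I would apply Green's first identity to $h$ on the bounded smooth domain $E$:
\[
\int_E \|\nabla h\|^2 \, dV \;=\; \int_{\be} h\, D_\n h \, dA \;-\; \int_E h \, \Delta h \, dV.
\]
Both terms on the right-hand side vanish: the boundary integral because $D_\n h = 0$ on $\be$, and the volume integral because $\Delta h = 0$ on $E$. Therefore $\int_E \|\nabla h\|^2 \, dV = 0$, which forces $\nabla h = 0$ throughout the connected open set $E$, so $h$ is constant on $E$, and by continuity on $\bar E$.

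No step here is really an obstacle — the only mild subtlety is that the standard statement of Green's identity presumes enough smoothness for the boundary integral and an application of the divergence theorem, but $\be$ is a real-analytic hypersurface and $h$ is smooth on $\bar E$ by assumption, so this is immediate.
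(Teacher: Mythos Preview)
Your proof is correct and follows essentially the same approach as the paper: apply Green's first identity with $u=h$, use $\Delta h = 0$ and $D_\n h = 0$ to conclude $\int_E |\nabla h|^2\,dV = 0$, and deduce that $h$ is constant.
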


\begin{proof}
Green's First Identity states that
\[
\int_E (u \Delta h  + \nabla u \cdot \nabla h) \, dV = \int_{\be} u D_\n h \, dA,
\]
where $u$ and $h$ are smooth on $\bar{E}$, and $\Delta = \nabla^2$ is the Laplace operator. Take $u = h$ in the equation above. We have $\Delta h = 0$ (because $h$ is harmonic) and $D_\n h = 0$ (because $\nabla h \cdot \nabla q  = 0$). Thus the equation above becomes
\[
\int_E |\nabla h|^2 \, dV = 0.
\]
Hence $\nabla h = 0$ on $E$, which implies that $h$ is a constant function.
\end{proof}

The hypothesis in Proposition \ref{unique} that $h$ is harmonic cannot be relaxed to the hypothesis that $h$ is a polynomial. For example, take $n = 2$ and $q(x_1, x_2) = {x_1}^2 + {x_2}^2$. Let
\[
h(x) = {x_1}^4 + 2{x_1}^2 {x_2}^2 + {x_2}^4 - 2 {x_1}^2 - 2{x_2}^2.
\]
Then $(\nabla q)(x) = (2x_1, 2x_2)$ and
\[
(\nabla h)(x) = (4 {x_1}^3 + 4 x_1 {x_2}^2 - 4x_1, 4 {x_1}^2 x_2 + 4{x_2}^3 - 4 x_2).
\]
Thus
\begin{align*}
(\nabla h)(x) \cdot (\nabla q)(x) &= 8 {x_1}^4 + 8 {x_1}^2 {x_2}^2 - 8{x_1}^2 + 8 {x_1}^2 {x_2}^2 + 8{x_2}^4 - 4 {x_2}^2 \\
&= 8( {x_1}^2 + {x_2}^2 )^2 - 8( {x_1}^2 + {x_2}^2 ).
\end{align*}
On $\be$, both terms in parentheses above equal $1$. Thus $\nabla h \cdot \nabla q = 0$ on $\be$ even though $h$ is not a constant function.

For $m$ a nonnegative integer, let $\Pm$ denote the vector space of polynomials (with real coefficients) of degree at most $m$ on~$\mathbf{R}^n$. Let $\hm$ denote the subspace of $\Pm$ consisting of harmonic polynomials of degree at most $m$ on~$\mathbf{R}^n$. Let $\Pm|_{\be}$ denote the vector space of restrictions to $\be$ of functions in $\Pm$.

A multi-index $\alpha = (\alpha_1, \dots, \alpha_n)$ is an $n$-tuple of nonnegative integers. We define $|\alpha|$ by the equation
\[
|\alpha| = \alpha_1 + \cdots + \alpha_n.
\]
For $x = (x_1, \dots, x_n) \in \rn$, we let $x^\alpha$ denote the monomial ${x_1}^{\alpha_1} \cdots {x_n}^{\alpha_n}$, which has degree $|\alpha|$. For $m$ a nonnegative integer, $\Pm$ is obviously the span of $\{x^\alpha : |\alpha| \le m\}$.

The next theorem gives a necessary and sufficient condition for a solution to our Neumann problem on an ellipsoid to exist. The implications (b) $\Rightarrow$ (c) and (c) $\Rightarrow$ (a) in the theorem below are easy. The depth in this result is the implication (a) $\Rightarrow$ (b).

The proof given below that (a) implies (b) is an existence proof, provided by the magic of linear algebra. The proof provides no hint as to how to compute the harmonic polynomial $h$ satisfying (b) given a polynomial $f$ satisfying (a). In Section \ref{algorithm}, we will provide an algorithm for doing this computation.

\begin{theorem} \label{exist}
Suppose $f$ is a polynomial on $\rn$. Then the following are equivalent:
\begin{subtheorem}
\item
$\displaystyle
\int_{\be} \frac{f}{\| \nabla q \|} \, dA = 0$.
\medskip
\item
There exists a harmonic polynomial $h$ on $\rn$ with $\deg h \le \deg f$ such that
\[
\nabla h \cdot \nabla q = f \text{\ on\ } \be.
\]
\item
There exists a harmonic function $h$ on $\bar{E}$ such that
\[
D_\n h  = \frac{f}{\|\nabla q\|} \text{\ on\ } \be.
\]
\end{subtheorem}
\end{theorem}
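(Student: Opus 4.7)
The plan is to dispose of the two easy implications and then attack (a) $\Rightarrow$ (b) by a linear-algebraic dimension count. The implication (b) $\Rightarrow$ (c) is immediate from the formula $D_\n h = (\nabla h \cdot \nabla q)/\|\nabla q\|$ together with the fact that a harmonic polynomial is certainly harmonic on $\bar E$. For (c) $\Rightarrow$ (a), I would apply Green's First Identity with $u = 1$ (exactly as in the proof of Proposition \ref{unique}), obtaining
\[
\int_E \Delta h \, dV = \int_{\be} D_\n h \, dA;
\]
since $h$ is harmonic, the left side vanishes, and substituting $D_\n h = f/\|\nabla q\|$ gives (a).

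For (a) $\Rightarrow$ (b), let $m = \deg f$ and consider the linear map $T \colon \hm \to \Pm|_{\be}$ defined by $T(h) = (\nabla h \cdot \nabla q)|_{\be}$, which is well-defined since $\nabla h \cdot \nabla q$ has degree at most $m$. By Proposition \ref{unique}, the kernel of $T$ consists only of the constants, so
\[
\dim \ran(T) = \dim \hm - 1.
\]
I next need to identify $\dim \Pm|_{\be}$. The restriction map $\hm \to \Pm|_{\be}$ (sending $h$ to $h|_{\be}$) is injective, because a harmonic polynomial vanishing on $\be$ solves the zero Dirichlet problem, hence vanishes on $\bar E$, hence vanishes identically. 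Moreover, by the polynomial solvability of the Dirichlet problem on ellipsoids proved in \cite{AGV}, every element of $\Pm|_{\be}$ is the restriction of a harmonic polynomial of degree at most $m$, so this restriction map is also surjective. Therefore $\dim \Pm|_{\be} = \dim \hm$.

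Combining these, $\ran(T)$ has codimension $1$ in $\Pm|_{\be}$. The linear functional $\Phi \colon \Pm|_{\be} \to \mathbf{R}$ given by $\Phi(g) = \int_{\be} g/\|\nabla q\| \, dA$ is nonzero (take $g = 1$), and by the already-proved implications (b) $\Rightarrow$ (c) $\Rightarrow$ (a) we have $\ran(T) \subseteq \ker \Phi$. Since both subspaces have codimension $1$ in $\Pm|_{\be}$, they must coincide. So if $f$ satisfies (a), then $f|_{\be} \in \ker \Phi = \ran(T)$, yielding a harmonic polynomial $h$ of degree at most $m$ with $T(h) = f|_{\be}$, i.e., $\nabla h \cdot \nabla q = f$ on $\be$. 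The main obstacle in the argument is the identification $\hm|_{\be} = \Pm|_{\be}$, which without a direct elementary proof appears to rely on the nontrivial result from \cite{AGV}; every other step is a short consequence of Proposition \ref{unique} or Green's identity.
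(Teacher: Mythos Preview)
Your proof is correct and follows essentially the same approach as the paper's: the same linear map $T$, the same use of the Dirichlet result from \cite{AGV} to identify $\dim \Pm|_{\be} = \dim \hm$, and the same codimension-one argument with the linear functional $g \mapsto \int_{\be} g/\|\nabla q\|\,dA$. The only cosmetic difference is that you invoke Green's First Identity with $u=1$ for (c) $\Rightarrow$ (a), whereas the paper uses Green's Second Identity with $g=1$; both collapse to the divergence theorem in this case.
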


\begin{proof}
First suppose that (b) holds. Because $\displaystyle D_\n h = \nabla h \cdot\frac{  \nabla q }{\| \nabla q \|}$, we see that (c) holds. Thus (b) implies (c).

Now suppose that (c) holds. Green's Second Identity states that if $g$ and $h$ are smooth functions on $\bar{E}$, then
\[
\int_E (g \Delta h - h \Delta g) \, dV = \int_{\be} (g D_\n h - h D_\n g) \, dA.
\]
In the equation above, take $g = 1$; thus $\Delta g = 0$ and $D_\n g = 0$. Our function $h$ provided by (c) is harmonic, and thus $\Delta h = 0$. Hence the equation above becomes
\begin{align*}
0 &= \int_{\be} D_\n h \, dA \\[4bp]
&= \int_{\be} \frac{f}{\| \nabla q \|} \, dA
\end{align*}
which completes the proof that (c) implies (a).

To prove that (a) implies (b), now  suppose that (a) holds. Let $m = \deg f$. Define linear maps $T \colon \hm \to \Pm|_{\be}$ and $U \colon \hm \to \Pm|_{\be}$ by
\[
T(h) = (\nabla h \cdot \nabla q)|_{\be} \andd U(h) = h|_{\be}.
\]
Taking a partial derivative reduces the degree of a polynomial by $1$, and then taking the dot product with $\nabla q = (2 \beta_1 x_1, \dots,  2\beta_n x_n)$ increases the degree back by $1$ (unless $h$ is a constant function). Thus $\deg (\nabla h \cdot \nabla q) = \deg h$ for all nonconstant functions $h \in \hm$. In other words, $T$ really does map $\hm$ into $\Pm|_{\be}$.

Proposition \ref{unique} tells us that $\Null T$, the null space of $T$, is the set of constant functions. Thus $\dim \Null T = 1$. A wonderful theorem that appears in every linear algebra book states that for a linear map, the dimension of the domain equals the dimension of the range plus the dimension of the null space. Thus
\begin{equation} \label{dimranT}
\dim \ran T = (\dim \hm) - 1.
\end{equation}

If $h \in \hm$ and $h|_{\be} = 0$ then the maximum principle for harmonic functions implies that $h = 0$ (for example, see 1.9 in \cite{HFT}). Thus $U$ is injective.

The range of $U$ is all of $\Pm|_{\be}$ because the Dirichlet problem with polynomial data for ellipsoids has polynomial solutions without increasing the degree; see, for example, Fishers's Decomposition Theorem (2.2 in \cite{AGV}) or Theorem 1 in \cite{Baker}.

Because $U$ is both injective and surjective, we can conclude that
\begin{equation} \label{dimhm}
\dim \hm = \dim \Pm|_{\be}.
\end{equation}

Define a linear functional $\phi \colon  \Pm|_{\be} \to \mathbf{R}$ by
\[
\phi(g) = \int_{\be} \frac{g}{\|\nabla q\|} \, dA.
\]
Because $\phi$ is a nonzero linear functional, we have
\begin{equation} \label{dimnullphi}
\dim \Null \phi = (\dim \Pm|_{\be}) - 1
\end{equation}

We have already proved that (b) $\Rightarrow$ (c) $\Rightarrow$ (a). In particular, (b) $\Rightarrow$ (a), which implies that
\[
\ran T \subset \Null \phi.
\]
Now \Ref{dimranT}, \Ref{dimhm}, and \Ref{dimnullphi} imply that the two subspaces above have the same dimension. Thus we have
\begin{equation} \label{rangenull}
\ran T = \Null \phi.
\end{equation}

Our hypothesis (a) implies that $f|_{\be} \in \Null \phi$. Thus \Ref{rangenull} implies that $f \in \ran T$. Hence there exists $h \in \hm$ such that $\nabla h \cdot \nabla q = f$ on $\be$. In other words, (b) holds, completing the proof that (a) implies (b).
\end{proof}

The generalized Neumann problem for an ellipsoid asks the following: Given polynomials $f$ and $g$ on $\rn$, find a polynomial $h$ on $\rn$ such that $D_{\n} h = \dfrac{f}{\|\nabla q\|}$ on $\be$ and $\Delta h = g$. If $g = 0$, then this generalized Neumann problem asks for $h$ to be harmonic, and thus it is then the Neumann problem we have already discussed.

Our solution to the generalized Neumann problem will require the following lemma, which is well known. A proof of the lemma below can be obtained by considering the linear map $u \mapsto \Delta u$ from $\mathcal{P}_{m+2}$ to $\mathcal{P}_m$; the null space of this map is $\mathcal{H}_{m+2}$; counting dimensions of the various spaces (use Proposition 5.8 in [ABR]) shows that this map is onto $\mathcal{P}_m$. However, the proof just outlined gives no hint as to how to calculate $u$ (which is not unique) given $g$ in the lemma below. Thus we present a constructive proof because our algorithm for solving the generalized Neumann problem will require a way to compute an antiLaplacian of a polynomial.

\begin{lemma} \label{Anti}
Suppose $g$ is a polynomial on $\rn$. Then there exists a polynomial $u$ on $\rn$ such that $\deg u = 2 + \deg g$ and $\Delta u = g$.
\end{lemma}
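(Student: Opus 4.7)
The plan is to reduce to the homogeneous case by linearity and then construct an antiLaplacian of a homogeneous polynomial by an explicit finite sum involving powers of $|x|^2 := x_1^2 + \cdots + x_n^2$ and iterated Laplacians of $g$. Write $g = g_0 + g_1 + \cdots + g_m$, where $g_k$ is homogeneous of degree $k$ and $g_m \neq 0$, so $m = \deg g$. If for each nonzero $g_k$ I can produce a polynomial $u_k$ homogeneous of degree $k+2$ with $\Delta u_k = g_k$, then $u := u_0 + u_1 + \cdots + u_m$ is a polynomial antiLaplacian of $g$; and because $\Delta u_m = g_m \neq 0$ forces $u_m \neq 0$, the resulting $u$ has degree exactly $m + 2 = 2 + \deg g$.

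For the homogeneous step, I would try the ansatz
\[
u_k = \sum_{j \ge 0} a_j \, |x|^{2(j+1)} \, \Delta^j g_k.
\]
Every summand is homogeneous of degree $k + 2$, and the sum is finite because $\Delta^j g_k = 0$ as soon as $2j > k$. The key computation is the product-rule identity
\[
\Delta\bigl(|x|^{2\ell} q\bigr) = 2\ell\bigl(n + 2(\ell + d - 1)\bigr)|x|^{2\ell - 2} q + |x|^{2\ell} \Delta q
\]
for $q$ homogeneous of degree $d$, which follows from a short calculation using Euler's identity $x \cdot \nabla q = d \, q$. Plugging this into the ansatz and matching the coefficient of each term $|x|^{2j} \Delta^j g_k$ converts $\Delta u_k = g_k$ into a triangular two-term recurrence for the scalars $a_j$: the leading equation determines $a_0$, and each subsequent $a_j$ is then forced by $a_{j-1}$ through division by $2(j+1)\bigl(n + 2(k-j)\bigr)$.

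The only place where things could go wrong is in that recurrence, namely if one of the denominators were to vanish. This is precisely where the standing hypothesis $n \ge 2$ is essential: since I only need $a_j$ when $\Delta^j g_k \neq 0$, I may restrict to $2j \le k$, so $k - j \ge 0$ and therefore $n + 2(k - j) \ge n > 0$. Every denominator is then strictly positive, the recurrence has a unique solution, and the resulting $u_k$ is the desired homogeneous antiLaplacian of $g_k$. Summing the $u_k$ yields the polynomial $u$ demanded by the lemma and, as a bonus, produces a concrete algorithm for computing it, which is exactly what the generalized Neumann algorithm will need.
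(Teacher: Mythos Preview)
Your proof is correct and genuinely different from the paper's. The paper reduces to monomials $x^\alpha$ and uses the identity
\[
\Delta\Bigl(\tfrac{x_1^2 x^\alpha}{(\alpha_1+1)(\alpha_1+2)}\Bigr) = x^\alpha + \sum_{k=2}^n \tfrac{(\alpha_k-1)\alpha_k}{(\alpha_1+1)(\alpha_1+2)}\,\tfrac{x_1^2 x^\alpha}{x_k^2},
\]
which trades $x^\alpha$ for monomials with $\alpha_1$ increased by $2$ and some $\alpha_k$ decreased by $2$; iterating drives every $\alpha_k$ ($k\ge 2$) into $\{0,1\}$, where the antiLaplacian is read off directly. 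Your approach instead works with whole homogeneous pieces $g_k$ and builds $u_k$ as a finite Almansi-type sum $\sum_j a_j |x|^{2(j+1)}\Delta^j g_k$, solving a two-term scalar recurrence for the $a_j$. Both are constructive; the paper's method is more elementary (no product rule beyond a single monomial computation) but requires a bookkeeping induction over multi-indices, while yours yields a closed-form ansatz that applies uniformly to any homogeneous polynomial and makes the degree claim $\deg u_k = k+2$ immediate. One minor overstatement: your recurrence actually only needs $n\ge 1$, since $n+2(k-j)\ge n>0$ already holds for $n=1$; the hypothesis $n\ge 2$ is not in fact ``essential'' at this step, though it is the paper's standing assumption.
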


\begin{proof}
It suffices to consider the case where $g$ is a monomial. Thus suppose that $g(x) = x^\alpha$ for some multi-index $\alpha$.

It is easy to see that
\begin{equation} \label{anti}
\Delta\Bigl( \frac{ {x_1}^2 x^\alpha }{ (\alpha_1 + 1)(\alpha_1 + 2) } \Bigr) = x^\alpha + \sum_{k=2}^n \frac{ (\alpha_k - 1) \alpha_k }{  (\alpha_1 + 1)(\alpha_1 + 2) }  \frac{ {x_1}^2 x^\alpha }{ {x_k}^2 }.
\end{equation}
The coefficient $\dfrac{ (\alpha_k - 1) \alpha_k }{  (\alpha_1 + 1)(\alpha_1 + 2) }$ equals $0$ if $\alpha_k = 1$ or $\alpha_k = 0$; thus the expression on the right is a polynomial even though it looks more like a rational function.

The equation above reduces the problem of finding an antiLaplacian of $x^\alpha$ to the problem of finding an antiLaplacian of each term in the summation on the right side of \Ref{anti}. In other words, we have a new set of antiLaplacian problems, where the original $\alpha_1$ has been replaced by $\alpha_1 + 2$ and an $\alpha_k \ge 2$ has been replaced by $\alpha_k - 2$. Iterating this process, we eventually reduce the problem to computing an antiLaplacian of $x^\alpha$ to the special case where $\alpha_k \in \{0, 1\}$ for each $k \ge 2$. In that case, \Ref{anti} shows that $\dfrac{ {x_1}^2 x^\alpha }{ (\alpha_1 + 1)(\alpha_1 + 2) }$ is an antiLaplacian of $x^\alpha$, completing the proof.
\end{proof}

For example, the algorithm provided by the proof above quickly finds that an antiLaplacian of the degree $14$ monomial ${x_1}^9 {x_2}^3 {x_3}^2$ is the degree $16$ polynomial
\[
\frac{ 2 {x_1}^{15} x_2 - 35 {x_1}^{13} {x_2}^3 - 105 {x_1}^{13} x_2 {x_3}^2 + 2730 {x_1}^{11} {x_2}^3 {x_3}^2 }{ 300300}.
\]

The special case of the next result when $g = 0$ is just Theorem \ref{exist}. We cannot eliminate Theorem \ref{exist} and just prove the theorem below because the proof of the theorem below requires Theorem \ref{exist}.

Unlike the proof of Theorem \ref{exist}, the proof below provides an algorithm for computing  the solution to generalized Neumann problems, provided that we can compute the solution to the regular Neumann problem (which we will show how to do in Section \ref{algorithm}).

\begin{theorem} \label{generalized}
Suppose $f$ and $g$ are polynomials on $\rn$. Then the following are equivalent:
\begin{subtheorem}
\item
$\displaystyle
\int_{\be} \frac{f }{\| \nabla q \|} \, dA = \int_E g \, dV$.
\medskip
\item
There exists a polynomial $h$ on $\rn$ with $\deg h \le \max\{\deg f, 2 + \deg g\}$ such that
\[
\Delta h = g \andd \nabla h \cdot \nabla q = f \text{\ on\ } \be.
\]
\item
There exists a smooth function $h$ on $\bar{E}$ such that
\[
\Delta h = g \andd D_\n h  = \frac{f}{\|\nabla q\|} \text{\ on\ } \be.
\]
\end{subtheorem}
\end{theorem}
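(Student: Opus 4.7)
The plan is to follow the pattern of Theorem \ref{exist}, with the two easy implications proved by standard identities and the hard implication (a) $\Rightarrow$ (b) proved by reducing to Theorem \ref{exist} via Lemma \ref{Anti}.

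First, (b) $\Rightarrow$ (c) is immediate since polynomials are smooth. For (c) $\Rightarrow$ (a), I apply the divergence theorem (or Green's Second Identity with the test function identically $1$) to the smooth function $h$ on $\bar{E}$: the boundary integral $\int_{\be} D_\n h \, dA$ equals the interior integral $\int_E \Delta h \, dV$, and substituting $D_\n h = f/\|\nabla q\|$ and $\Delta h = g$ yields (a).

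The main work is (a) $\Rightarrow$ (b), which I would handle by a two-step splitting. Using Lemma \ref{Anti}, I first produce a polynomial $u$ with $\Delta u = g$ and $\deg u = 2 + \deg g$. Set
\[
\tilde f = f - \nabla u \cdot \nabla q,
\]
which is a polynomial with $\deg \tilde f \le \max\{\deg f,\, 2 + \deg g\}$. By the divergence theorem,
\[
\int_{\be} \frac{\nabla u \cdot \nabla q}{\|\nabla q\|}\, dA = \int_{\be} D_\n u\, dA = \int_E \Delta u\, dV = \int_E g\, dV,
\]
and combining with the hypothesis (a) gives $\int_{\be} \tilde f/\|\nabla q\|\, dA = 0$. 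Hence $\tilde f$ satisfies condition (a) of Theorem \ref{exist}, so that theorem delivers a harmonic polynomial $v$ with $\deg v \le \deg \tilde f$ and $\nabla v \cdot \nabla q = \tilde f$ on $\be$. Taking $h = u + v$ then yields $\Delta h = g + 0 = g$ and, on $\be$, $\nabla h \cdot \nabla q = \nabla u \cdot \nabla q + \tilde f = f$, with degree bound $\deg h \le \max\{\deg u, \deg v\} \le \max\{\deg f,\, 2 + \deg g\}$, as required.

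The only conceptual obstacle is verifying that the compatibility condition of Theorem \ref{exist} is inherited by $\tilde f$; everything else is bookkeeping about degrees. Note that this proof is constructive provided one has algorithms for antiLaplacians (supplied by Lemma \ref{Anti}) and for the ordinary Neumann problem (to be supplied in Section \ref{algorithm}), which matches the remark preceding the theorem.
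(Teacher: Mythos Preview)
Your proof is correct and follows essentially the same route as the paper: the easy implications use Green's Second Identity with the test function $1$, and (a) $\Rightarrow$ (b) is reduced to Theorem \ref{exist} by subtracting an antiLaplacian $u$ of $g$ supplied by Lemma \ref{Anti} and checking the compatibility condition for $\tilde f = f - \nabla u \cdot \nabla q$. The only cosmetic difference is notation ($v$ versus the paper's $p$).
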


\begin{proof}
First suppose that (b) holds. Because $\displaystyle D_\n h = \nabla h \cdot\frac{  \nabla q }{\| \nabla q \|}$, we see that (c) holds. Thus (b) implies (c).

Now suppose that (c) holds. Green's Second Identity states that if $u$ and $h$ are smooth functions on $\bar{E}$, then
\[
\int_E (u \Delta h - h \Delta u) \, dV = \int_{\be} (u D_\n h - h D_\n u) \, dA.
\]
In the equation above, take $u = 1$; thus $\Delta u = 0$ and $D_\n u = 0$. Our function $h$ provided by (c) satisfies the equation $\Delta h = g$. Hence the equation above becomes
\begin{align*}
\int_E g \, dV &= \int_{\be} D_\n h \, dA \\[4bp]
&= \int_{\be} \frac{f}{\| \nabla q \|} \, dA,
\end{align*}
which completes the proof that (c) implies (a).

To prove that (a) implies (b), now  suppose that (a) holds. Let $u$ be a polynomial on $\rn$ with $\deg u = 2 + \deg g$ and $\Delta u = g$; the existence of this antiLaplacian $u$ is guaranteed by Lemma \ref{Anti}.

Now
\begin{align*}
\int_{\be} \frac{  \nabla u \cdot \nabla q}{\| \nabla q \|} \, dA &= \int_{\be} D_{\n}u \, dA \\[3bp]
&= \int_E \Delta u \, dV \\[3bp]
&= \int_E g \, dV\\[3bp]
&= \int_{\be} \frac{ f }{\| \nabla q \|} \, dA,
\end{align*}
where the second equality comes from Green's Second Identity (take one of the functions to equal $1$) and the last equality comes from the assumption in (a).

We now use the implication (a) $\Rightarrow$ (b) in Theorem \ref{exist} with $f$ in Theorem \ref{exist} replaced by $f - \nabla u \cdot \nabla q$, which is valid because the equation above tells us that $\displaystyle\int_{\be} \frac{ f - \nabla u \cdot \nabla q}{\| \nabla q \|} \, dA = 0$. The degree of $f - \nabla u \cdot \nabla q$ is at most $\max\{\deg f, 2 + \deg g\}$. Thus Theorem \ref{exist} implies that there exists a harmonic polynomial $p$ with degree at most  $\max\{\deg f, 2 + \deg g\}$ such that
\[
\nabla p \cdot \nabla q = f - \nabla u \cdot \nabla q \text{\ on\ } \be.
\]
Let $h = u + p$. Then $\deg h \le  \max\{\deg f, 2 + \deg g\}$ and
\[
\Delta h = \Delta u + \Delta p = g + 0 = g.
\]
Furthermore,
\[
\nabla h \cdot \nabla q = \nabla u \cdot \nabla q + \nabla p \cdot \nabla q = \nabla u \cdot \nabla q + (f - \nabla u \cdot \nabla q) = f \text{\ on\ } \be,
\]
completing the proof that (a) implies (b).
\end{proof}

\section{Computing Surface Area Integrals on an Ellipsoid}

We now turn to the question of computing $\displaystyle\int_{\be} \frac{f}{\| \nabla q \|} \, dA(x)$ for a polynomial $f$ on $\rn$. This question is of interest because Theorem \ref{exist} tells us that $\dfrac{f}{\|\nabla q\|}$ is the normal derivative on $\be$ of some harmonic polynomial on $\rn$ if and only if this integral equals $0$. Also, Proposition \ref{integratearea} below is used by the software described in Section \ref{examples}.

Each polynomial $f$ on $\rn$ can be written in the form $f = \sum_{\alpha} c_\alpha x^\alpha$ for some choice of constants $\{c_\alpha\}$. Hence we concentrate on computing $\displaystyle\int_{\be} \frac{x^\alpha}{\| \nabla q(x) \|} \, dA(x)$.

The double factorial will appear in our next result. If $m$ is an odd positive integer, then the double factorial of $m$, denoted $m!!$, is the product of the positive odd integers less than or equal to $m$. In other words,
\[
m!! = 1 \cdot 3 \cdot 5 \cdot \ \cdots \ \cdot m.
\]
For convenience, we define $(-1)!! = 1$.

Let $B$ denote the open unit ball in $\rn$; thus
\[
B = \{x \in \rn: \|x\| < 1 \}.
\]
The volume of $B$ is denoted by $\textup{vol}(B)$. Thus $\textup{vol}(B) = \frac{4}{3}\pi$ if $n = 3$; the formula for other values of $n$ is derived, for example, in Appendix A of \cite{HFT}.

If at least one of the nonnegative integers $\alpha_1, \dots, \alpha_n$ is odd, then it is easy to see that $\displaystyle\int_{\be} \frac{ x^\alpha }{\|\nabla q(x)\|} \, dA(x) = 0$. Thus the next result only considers the case where each $\alpha_j$ is even.

Let $\beta = (\beta_1, \dots, \beta_n)$. Because $\nabla q = 2(\beta_1 x_1, \dots, \beta_n x_n)$, we have the equation
\begin{equation} \label{monomap}
\nabla x^\alpha \cdot \nabla q = 2(\alpha \cdot \beta) x^\alpha,
\end{equation}
which will be used in the proof below and in the next section.

\begin{proposition} \label{integratearea}
Suppose $\alpha = (\alpha_1, \dots, \alpha_n)$ is an $n$-tuple of nonnegative even integers. Then
\[
\int_{\be} \frac{x^\alpha}{\| \nabla q(x) \|} \, dA(x) = \frac{n\,\textup{vol}(B)}{ 2 \sqrt{\prod_{j=1}^n {\beta_j}^{\alpha_j + 1}}}  \cdot \frac{ (\alpha_1 - 1)!! \cdots (\alpha_n - 1)!! }{n(n+2)\cdots (n + |\alpha| - 2) }.
\]
\end{proposition}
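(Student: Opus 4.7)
The plan is to reduce the surface integral to a monomial integral over the unit ball $B$ in two short steps. First I would apply the divergence theorem to the vector field $F(x) = x^\alpha x$. Because $x^\alpha$ is homogeneous of degree $|\alpha|$, Euler's identity gives $\operatorname{div} F = (n + |\alpha|)\,x^\alpha$; on $\be$ we have $x \cdot \n = (x \cdot \nabla q)/\|\nabla q\| = 2q/\|\nabla q\| = 2/\|\nabla q\|$, since $q = 1$ on $\be$. Equating the two sides of the divergence theorem yields the key identity
\[
\int_{\be} \frac{x^\alpha}{\|\nabla q\|} \, dA = \frac{n + |\alpha|}{2} \int_E x^\alpha \, dV.
\]

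Next I would evaluate $\int_E x^\alpha \, dV$ by the diagonal change of variables $y_j = \sqrt{\beta_j}\,x_j$, which maps $E$ diffeomorphically onto $B$ with Jacobian $1/\sqrt{\beta_1 \cdots \beta_n}$ and converts $x^\alpha$ to $y^\alpha / \sqrt{\prod_j \beta_j^{\alpha_j}}$, so that
\[
\int_E x^\alpha \, dV = \frac{1}{\sqrt{\prod_{j=1}^n \beta_j^{\alpha_j+1}}} \int_B y^\alpha \, dV.
\]
Passing to spherical coordinates gives $\int_B y^\alpha\,dV = (n + |\alpha|)^{-1} \int_{S^{n-1}} \omega^\alpha\,dA(\omega)$, and for an even multi-index $\alpha$ the classical Gamma-function formula gives
\[
\int_{S^{n-1}} \omega^\alpha \, dA = \frac{2 \prod_{j=1}^n \Gamma\bigl(\tfrac{\alpha_j+1}{2}\bigr)}{\Gamma\bigl(\tfrac{n + |\alpha|}{2}\bigr)}.
\]
When these three displays are combined, the factors of $n + |\alpha|$ cancel and only a ratio of Gamma values remains.

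The remaining task is bookkeeping: I would substitute $\Gamma((\alpha_j+1)/2) = \sqrt{\pi}\,(\alpha_j-1)!!/2^{\alpha_j/2}$, use the telescoping identity $\Gamma((n+|\alpha|)/2)/\Gamma(n/2) = n(n+2)\cdots(n+|\alpha|-2)/2^{|\alpha|/2}$, and replace $\pi^{n/2}/\Gamma(n/2)$ by $(n/2)\,\textup{vol}(B)$ via $\textup{vol}(B) = \pi^{n/2}/\Gamma(n/2+1)$. The powers of $2$ and $\pi$ then collapse and produce exactly the stated formula. This Gamma-function bookkeeping is the only finicky step; the conceptual content is just Euler's identity together with a diagonal scaling.
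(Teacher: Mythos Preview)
Your argument is correct. Both you and the paper convert the surface integral to a volume integral over $E$, rescale to the unit ball, pass to spherical coordinates, and then invoke the classical formula for $\int_{S^{n-1}} \omega^\alpha\,dA$. The difference is in the first step. The paper uses the identity $\nabla x^\alpha \cdot \nabla q = 2(\alpha\cdot\beta)\,x^\alpha$ together with Green's Second Identity, which turns the surface integral into $\frac{1}{2(\alpha\cdot\beta)}\int_E \Delta(x^\alpha)\,dV$; this forces a sum over $j$ coming from the Laplacian and also breaks down when $\alpha = 0$, so the paper treats $|\alpha|=0$ separately by writing $1 = q$ on $\be$. Your use of the divergence theorem with $F(x) = x^\alpha\,x$ and Euler's relation $x\cdot\nabla q = 2q$ gives $\int_{\be} x^\alpha/\|\nabla q\|\,dA = \tfrac{n+|\alpha|}{2}\int_E x^\alpha\,dV$ directly, with no sum and no case split (the formula is already correct at $\alpha = 0$). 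So your route is a bit cleaner; the paper's route has the minor advantage that the identity $\nabla x^\alpha\cdot\nabla q = 2(\alpha\cdot\beta)x^\alpha$ is reused elsewhere in the paper. The Gamma-function bookkeeping you outline is routine and checks out.
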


\begin{proof}
First we consider the case where $|\alpha| > 0$. We have
\begin{align*}
\int_{\be} \frac{x^\alpha}{\| \nabla q(x) \|} \, dA(x) &= \int_{\be} \frac{\nabla x^\alpha \cdot \nabla q}{2 (\alpha \cdot \beta) \|\nabla q(x)\|} \, dA(x)  \\[6bp]
&= \frac{1}{2 (\alpha \cdot \beta)} \int_{\be} D_\n x^\alpha \, dA(x) \\[6bp]
&= \frac{1}{2 (\alpha \cdot \beta)} \int_E \Delta(x^\alpha) \, dV(x),
\end{align*}
where the last equality follows from Green's Second Identity. Evaluating the Laplacian $\Delta(x^\alpha)$ we thus have
\begin{align*}
\int_{\be} \frac{x^\alpha}{\| \nabla q(x) \|} \, dA(x) &=  \frac{1}{2 (\alpha \cdot \beta)} \int_E \sum_{j=1}^n \alpha_j(\alpha_j -1 ) \frac{x^\alpha}{{x_j}^2} \, dV(x) \\[6bp]
&= \frac{1}
{2 (\alpha \cdot \beta) \sqrt{\prod_{j=1}^n {\beta_j}^{\alpha_j + 1}}}
\sum_{j = 1}^n \alpha_j (\alpha_j - 1) \beta_j  \int_B \frac{x^\alpha}{{x_j}^2} \, dV(x),
\end{align*}
where the last equation comes from a standard change of variables to change the integral from $E$ to the ball $B$.

Because $\displaystyle\int_B f(x) \, dV(x) = \int_0^1 r^{n-1} \int_{\partial B} f(rx) \, dA(x) \, dr$ for every continuous function $f$ on $B$ (see, for example, Exercise 6 in Chapter 8 of \cite{Rudin}), the equation above can be rewritten as
\begin{align*}
\int_{\be} \frac{x^\alpha}{\| \nabla q(x) \|}& \, dA(x) \\
&\hspace*{-.1in}= \frac{1}{2 (\alpha \cdot \beta) \sqrt{\prod_{j=1}^n {\beta_j}^{\alpha_j + 1}}(n + |\alpha| - 2)} \sum_{j = 1}^n \alpha_j (\alpha_j - 1) \beta_j  \int_{\partial B} \frac{x^\alpha}{{x_j}^2} \, dA(x).
\end{align*}
Using the formula for integrating a monomial over the unit sphere $\partial B$ (see Section 3 of Hermann Weyl's paper \cite{Weyl}), this becomes
\begin{align*}
\int_{\be} &\frac{x^\alpha}{\| \nabla q(x) \|} \, dA(x)\\
&= \frac{n \,\text{vol}(B)}{2 (\alpha \cdot \beta) \sqrt{\prod_{j=1}^n {\beta_j}^{\alpha_j + 1}}(n + |\alpha| - 2)} \sum_{j = 1}^n \alpha_j  \beta_j  \frac{ (\alpha_1 - 1)!! \cdots (\alpha_n - 1)!!}{ n(n+2) \cdots (n + |\alpha| - 4) } \\[6bp]
&= \frac{n \, \text{vol}(B)}{ 2 \sqrt{\prod_{j=1}^n {\beta_j}^{\alpha_j + 1}}}  \cdot \frac{ (\alpha_1 - 1)!! \cdots (\alpha_n - 1)!! }{n(n+2)\cdots (n + |\alpha| - 2) },
\end{align*}
completing the proof in the case where $|\alpha| > 0$.

Now suppose $|\alpha| = 0$. In other words, we want to compute $\displaystyle\int_{\be} \frac{1}{\|\nabla q(x)\|} \, dA(x)$. The constant function $1$ cannot be written in the form $\nabla f \cdot \nabla q$ for any polynomial $f$, and hence the technique used above when $|\alpha| > 0$ will not work. However, $1 = \beta_1 {x_1}^2 + \cdots + \beta_n {x_n}^2$ on $\be$. Thus
\begin{align*}
\int_{\be} \frac{1}{\|\nabla q(x)\|} \, dA(x) &= \int_{\be} \frac{\beta_1 {x_1}^2 + \cdots + \beta_n {x_n}^2}{\|\nabla q(x)\|} \, dA(x) \\[6bp]
&= \sum_{k=1}^n \beta_k \int_{\be} \frac{ {x_k}^2 }{\|\nabla q(x)\|} \, dA(x) \\[6bp]
&= \sum_{k=1} \beta_k \frac{n \, \text{vol}(B) } { 2 \beta_k \sqrt{\prod_{j=1}^n \beta_j}} \cdot \frac{1}{n}\\[6bp]
&= \frac{n \, \text{vol}(B) } { 2 \sqrt{\prod_{j=1}^n \beta_j}},
\end{align*}
where the third equality above comes from the formula we have already proved in the case when $|\alpha| > 0$. The formula above is the desired result when $|\alpha| = 0$ because in this case we interpret the empty product $n(n+2)\cdots (n + |\alpha| - 2)$ to be $1$.
\end{proof}

\section{An Algorithm for Solving the Neumann Problem} \label{algorithm}

In this section, we present an algorithm that solves our Neumann problem. More specifically, for a given polynomial $f$ satisfying condition (a) of Theorem \ref{exist}, our algorithm finds the unique harmonic polynomial $h$ such that $\nabla{h} \cdot \nabla{q} = f$ on $\partial{E}$ and $h(0) = 0$.

Let $\mathcal{P}^0$ denote the vector space of polynomials $g$ on $\rn$ such that $g(0) = 0$. To develop the algorithm, we define the linear map $S \colon \mathcal{P}^0 \to \mathcal{P}^0$ by
\[
	S(g) =\nabla{g} \cdot \nabla{q}.
\]
Each polynomial $g \in \mathcal{P}^{0}$ can be written as a finite linear combination of monomials
\[
	g=\sum_{0 < |\alpha|} c_{\alpha}x^{\alpha}
\]
for some constants $\{c_\alpha\}$ (where all but finitely many of the $c_\alpha$ equal $0$). From \Ref{monomap} we have
\[
S\Bigl(\sum_{0 < |\alpha|} c_{\alpha}x^{\alpha}\Bigr) = \sum_{0 <|\alpha|} 2(\alpha\cdot\beta) c_{\alpha}x^{\alpha}.
\]
The equation above shows that $S$ is a one-to-one mapping of $\mathcal{P}^0$ onto itself and has an inverse
\begin{equation} \label{Sinverse}
	S^{-1}\Bigl(\sum_{0 < |\alpha|} c_{\alpha}x^{\alpha}\Bigr)=\sum_{0 < |\alpha|} \frac{c_{\alpha}}{2(\alpha \cdot\beta)} x^{\alpha}.
\end{equation}
Furthermore, $\deg g = \deg S(g) = \deg S^{-1}(g)$ for each $g \in \mathcal{P}^0$.

Suppose $g = \sum_{|\alpha| \le m} c_\alpha x^\alpha \in \Pm$ . Then $qg \in \mathcal{P}^0$. Let $\delta_j$ denote the multi-index whose $j^{\text{th}}$-coordinate equals $1$ and all other coordinates equal $0$. We will need the following formula for our algorithm:
\begin{align}
	S^{-1}(qg) &= S^{-1}\Bigl(\sum_{j=1}^{n}\beta_j {x_j}^2\sum\limits_{|\alpha|\leq m}c_{\alpha}x^{\alpha}\Bigr) \notag\\
	&= S^{-1}(\sum\limits_{|\alpha|\leq m}\sum_{j=1}^{n}\beta_j c_{\alpha} x^{\alpha+2\delta_j})   \notag\\
	&= \sum\limits_{|\alpha|\leq m}\sum_{j=1}^{n} \frac{\beta_j c_{\alpha}}{2\bigl((\alpha+2\delta_j)\cdot\beta\bigr)} x^{\alpha+2\delta_j}   \notag\\
	&= \sum\limits_{|\alpha|\leq m}\sum_{j=1}^{n} \frac{\beta_j c_{\alpha}}{2(\alpha \cdot\beta)+4\beta_j} x^{\alpha+2\delta_j}. \label{sinvqg}
\end{align}

Now we are ready to develop the algorithm that solves our Neumann problem on ellipsoids. Suppose $f \in \mathcal{P}_{m+2}$ and $\displaystyle \int_{\be} \frac{f}{\|\nabla q\|} \, dA = 0$. Theorem \ref{exist} implies that there exists a harmonic polynomial $h \in \mathcal{H}_{m+2}$ with $h(0) = 0$ such that $\nabla h \cdot \nabla q = f$ on $\be$. Thus $\nabla h \cdot \nabla q - f$ is a polynomial that equals $0$ on  $\be$, which implies (see Lemma 2.9 of \cite{AGV}) that there exists a polynomial $g \in \Pm$ such that
\begin{equation} \label{maineq}
	f	= \nabla{h} \cdot \nabla{q} + (q-1)g.
\end{equation}
We need an algorithm to calculate $h$ given $f$ and $q$. Our plan of attack is to first calculate the polynomial $g$ in the equation above. We will do this by transforming the equation above to take the harmonic polynomial $h$ temporarily out of the calculation. After finding $g$, we will go back to the equation above to calculate $h$.

Let $f_k$ and $g_k$ be polynomials that are homogeneous of degree $k$ such that $f = \sum_{k=0}^{m+2} f_k$ and $g = \sum_{k=0}^m g_k$. Writing
\[
f	= \nabla{h} \cdot \nabla{q} + qg-g,
\]
we immediately see from \Ref{maineq} that $g_0=-f_0$.
If we subtract the constant terms $f_0$ and $-g_0$ from each side of the equation above, then we are left with polynomials in $\mathcal{P}^0$. We can then apply $S^{-1}$ to get
\begin{align}
S^{-1}(f-f_0)   &= S^{-1}(\nabla{h} \cdot \nabla{q}) + S^{-1}(qg) - S^{-1}(g-g_0) \notag \\
				&= h + S^{-1}(qg) - S^{-1}(g-g_0).	\label{Sinveq}			
\end{align}
The equation above, along with the explicit formulas \Ref{Sinverse} and \Ref{sinvqg} for $S^{-1}$, show that we can calculate $h$ once we know $g$.

To make the problem more tractable, we break each polynomial in \Ref{Sinveq} into its homogeneous components to obtain
\[
	S^{-1}\Bigl(\sum_{k=1}^{m+2}f_k\Bigr) = \sum_{k=1}^{m+2}h_k + S^{-1}\Bigl(q\sum_{k=0}^{m}g_k\Bigr) - S^{-1}\Bigl(\sum_{k=1}^{m}g_k\Bigr),
\]
where each $h_k$ is harmonic (see page 75 of \cite{HFT}). Since $S^{-1}$ is linear and preserves the degree of polynomials, we can further break the equation above into homogeneous equations by degree and obtain the following system of equations:
\begin{alignat}{3}
m+2&: \qquad \qquad \qquad S^{-1}(f_{m+2}) &&= h_{m+2} + S^{-1}(qg_m) \notag \\
m+1&: \qquad \qquad \qquad S^{-1}(f_{m+1}) &&= h_{m+1} + S^{-1}(qg_{m-1}) \notag \\
m&: \qquad \qquad \qquad S^{-1}(f_{m}) + S^{-1}(g_m) &&= h_{m} + S^{-1}(qg_{m-2}) \notag \\
m-1&: \qquad \qquad \qquad S^{-1}(f_{m-1}) + S^{-1}(g_{m-1}) &&= h_{m-1} + S^{-1}(qg_{m-3}) \notag \\
&&\vdots \label{E:syseqn}\\
3&: \qquad \qquad \qquad S^{-1}(f_{3}) + S^{-1}(g_3)&&= h_{3} + S^{-1}(qg_1) \notag \\
2&: \qquad \qquad \qquad S^{-1}(f_{2}) + S^{-1}(g_2)&&= h_{2} + S^{-1}(qg_0) \notag \\
1&: \qquad \qquad \qquad S^{-1}(f_{1}) + S^{-1}(g_1) &&= h_{1} \notag\\
0&: \qquad \qquad \qquad f_0 + g_0 &&= 0 \notag
\end{alignat}
Here, each $f_k$ and $q$ are known and we need to compute $h_{m+2}, \ldots, h_1$ and $g_m, \ldots, g_0$.

The first thing to note is that even and odd degree equations are decoupled from one another (a typical equation above involves $f_k$, $g_k$, and $g_{k-2}$). Hence, the problem at hand can be decomposed into two smaller sub-problems each exclusively involving either even or odd degree polynomials.

Additionally, if for convenience we set
\begin{equation} \label{Sm}
S^{-1}(g_{m+2})=S^{-1}(g_{m+1})=S^{-1}(qg_{-1})=0,
\end{equation}
then all the equations above have the same form
\begin{equation} \label{ktheq}
k+2: \qquad \qquad \qquad	S^{-1}(f_{k+2})+S^{-1}(g_{k+2})=h_{k+2}+S^{-1}(qg_{k}).
\end{equation}
Our strategy is to start from the highest degree equation and solve the system sequentially down to lower degree equations. More specifically, we will solve the first equation to find $g_m$, which we pass down to the left-hand-side of the $m^{th}$ stage equation to find $g_{m-2}$ in the right-hand-side and so on. We apply the same process to find $g_{m-1}$ and its successors. Hence, if we know how to solve \Ref{ktheq}, we can repetitively apply it to solve the system in \Ref{E:syseqn} and calculate all $g_k$ for $k=0,\ldots,m$. Once we have $g$, we plug it back into \Ref{Sinveq} to find $h$.

For convenience, define
\begin{equation} \label{rk}
	r_k = S^{-1}(f_k) + S^{-1}(g_k).
\end{equation}
Then, \Ref{ktheq} can be written as
\[
r_{k+2} = h_{k+2} + S^{-1}(q g_k).
\]
Write $g_k = \sum_{|\alpha|=k} c_{\alpha} x^{\alpha}$as a sum of monomials of degree $k$ with unknown coefficients \{$c_{\alpha}$\}.
Now apply the Laplacian operator to both sides of the equation above to take the harmonic polynomial $h_{k+2}$ out of the calculation:\pagebreak[0]
\begin{align} \label{syseq}
\Delta (r_{k+2})&= \Delta \bigl(S^{-1}(q g_k)\bigr) \notag \\
&= \Delta \Bigl({\sum\limits_{|\alpha| = k}\sum_{j=1}^{n} \frac{\beta_j c_{\alpha}}{2(\alpha \cdot \beta)+4\beta_j} x^{\alpha+2\delta_j}}\Bigr) \notag \\
&= {\sum\limits_{|\alpha| = k}\sum_{j=1}^{n} \frac{\beta_j c_{\alpha}}{2(\alpha \cdot \beta)+4\beta_j} \Delta (x^{\alpha+2\delta_j}}) \notag \\
&= \sum\limits_{|\alpha| = k}\sum_{j=1}^{n} \frac{\beta_j c_{\alpha}}{2(\alpha \cdot \beta)+4\beta_j}\Bigl(\sum\limits_{l=1}^{n}\alpha_{l}(\alpha_{l}-1) x^{\alpha+2\delta_j-2\delta_l} + (4\alpha_j+2)x^\alpha\Bigr),
\end{align}
where the second equality comes from \Ref{sinvqg}. Note that the constants {$c_\alpha$} are the only unknowns on the right side of the above equation. Once we find all these coefficients, we will know $g_k$.

Begin by considering the cases where $k = m$ or $k = m-1$. In those cases, \Ref{rk} and \Ref{Sm} imply that $r_{k+2} = S^{-1}(f_{k+2})$ and thus the left side of \Ref{syseq} is known. Hence we can solve for the $\{c_\alpha\}$ corresponding to $g_m$ and $g_{m-1}$ (as discussed in the next paragraph). Now that $g_m$ and $g_{m-1}$ are known, we can consider the cases where $k = m-2$ or $k = m-3$. In those cases, \Ref{rk} shows that $r_m$ and $r_{m-1}$ are known, and hence the left side of \Ref{syseq} is again known; thus we can solve for the $\{c_\alpha\}$ corresponding to $g_{m-2}$ and $g_{m-3}$  (as discussed in the next paragraph). This process can be continued, solving for $g_m, g_{m-1}, g_{m-2}, \dots, g_1, g_0$ and thus solving for $g$.

All that remains is to discuss how to solve \Ref{syseq} for the $\{c_\alpha\}$ in the case where we know the left side of \Ref{syseq}. Both sides of \Ref{syseq} are homogeneous polynomials of degree $k$. Thus by comparing the coefficients of monomials $x^\alpha$ ($|\alpha| = k$) on each side of the equation, we get a system of linear equations, which can be solved by Gaussian elimination for the $\{c_\alpha\}$ (a solution is guaranteed to exist by Theorem \ref{exist}).

The aforementioned system of linear equations with unknowns \{$c_\alpha$\} can be broken down into several smaller systems of linear equations by partitioning the multi-indices \{$\alpha : |\alpha|=k$\} into groups that have the same parities---two multi-indices $\alpha$ and $\gamma$ are grouped together if $\alpha_i = \gamma_i$ (mod 2) for each $i=1,\ldots,n$. Solving these several smaller systems of equations instead of the one large system results in significant computational savings, as discussed in \cite{AGV} in connection with solving the Dirichlet problem on ellipsoids.

The algorithm discussed in this section has been implemented in software, with results that can be verified to be correct. The next section provides examples that were calculated using the algorithm discussed here.

\section{Examples} \label{examples}

The results and algorithms in this paper have been incorporated into a new version of the \textit{HFT Mathematica} package for symbolic manipulation of harmonic functions. This software is available without charge from the websites listed at \cite{A}. The \texttt{neumann} section of the \textit{Computing with Harmonic Functions} documentation available at \cite{A} is particularly relevant to this paper.

As an example of the Neumann problem on an ellipsoid in $\mathbf{R}^3$, we start with the function ${x_1}^4 {x_2}^2$ on the ellipsoid $\{x \in \mathbf{R}^3 : 3{x_1}^2 + {x_2}^2 + 2{x_3}^2 = 1\}$. This function does not satisfy the necessary condition (a) of Theorem \ref{exist}, but we can easily adjust it by adding an appropriate constant. Specifically, Proposition \ref{integratearea} can be used to show that for the ellipsoid under consideration, ${x_1}^4 {x_2}^2 - \frac{1}{315}$ satisfies condition (a) of Theorem \ref{exist} (the \textit{HFT Mathematica} package can perform this calculation using its \texttt{integrateEllipsoidArea} function). Then the \texttt{neumann} function in the \textit{HFT Mathematica} package, which uses the algorithm described in Section \ref{algorithm} of this paper, produces the following result.

\begin{example} \label{example}
Suppose $f(x_1, x_2, x_3) = {x_1}^4 {x_2}^2 - \frac{1}{315}$ and
\[
q(x) = 3{x_1}^2 + {x_2}^2 + 2{x_3}^2.
\]
Then the degree $6$ polynomial $h$ on $\mathbf{R}^3$ defined by
\begin{align*}
h(x_1, x_2, x_3) = (&3491640 {x_1}^4 {x_3}^2-2454945 {x_1}^6+33332535 {x_1}^4 {x_2}^2 -4145028{x_1}^4\\
&-26323635 {x_1}^2{x_2}^4+3517260 {x_1}^2{x_3}^4+30392244 {x_1}^2 {x_2}^2 \\
&-42053400 {x_1}^2 {x_2}^2 {x_3}^2-5522076 {x_1}^2 {x_3}^2 +1437395 {x_1}^2\\
&+1477725 {x_2}^6-424560 {x_3}^6-4317208 {x_2}^4+2851140 {x_2}^2 {x_3}^4\\
&+1668512 {x_3}^4+2056969 {x_2}^2+4157760 {x_2}^4 {x_3}^2-4488996 {x_2}^2 {x_3}^2\\
&-3494364 {x_3}^2) / 2701782720
\end{align*}
is harmonic on $\mathbf{R}^3$ and $\nabla h \cdot \nabla q = f$ on the ellipsoid $\{x \in \mathbf{R}^3 : q(x) = 1\}$.
\end{example}

A striking feature of the example above (and of similar examples on ellipsoids that are not spheres) is the presence of large integers in the solution even though the input data contains only small integers.

To verify that the function $h$ in Example \ref{example} really has the claimed properties, first compute the Laplacian of $h$ (use a computer unless you like arithmetic), getting $0$ (thus $h$ is harmonic, as claimed).

The next step in the verification of Example \ref{example} is to have a computer find the dot product of the gradient of $h$ and the gradient of $q$ [which is $(6x_1, 2x_2, 4x_3)$], getting a messy degree $6$ polynomial on $\mathbf{R}^3$. This messy degree $6$ polynomial is supposed to equal ${x_1}^4 {x_2}^2 - \frac{1}{315}$ on $\{x \in \mathbf{R}^3 : q(x) = 1\}$. Thus subtract ${x_1}^4 {x_2}^2 - \frac{1}{315}$ from this messy degree $6$ polynomial, getting a different messy degree $6$ polynomial that is supposed to equal $0$ on $\{x \in \mathbf{R}^3 : q(x) = 1\}$. Now ask your symbolic processing program to factor this polynomial, and then note that $q(x) - 1$ is a factor. Thus the polynomial equals $0$ on $\{x \in \mathbf{R}^3 : q(x) = 1\}$, completing the verification that $\nabla h \cdot \nabla q = f$ on $\{x \in \mathbf{R}^3 : q(x) = 1\}$. This verification provides a satisfying reassurance that the algorithm described in Section \ref{algorithm} works as expected.

The \textit{Mathematica} version of the \textit{Computing with Harmonic Functions} documentation available at \cite{A} is a live \textit{Mathematica} notebook that can be modified by the user to provide additional examples (see the \texttt{neumann} section) and to carry out the verification procedure described in the two paragraphs above.

Having verified that the function $h$ in Example \ref{example} has the claimed properties, we can note that $h$ also satisfies the equation $h(0, 0, 0) = 0$. Proposition~\ref{unique} tells us that the polynomial $h$ in Example \ref{example} is the unique harmonic function such that $h(0,0,0) = 0$ and $\nabla h \cdot \nabla q = f$ on $\{x \in \mathbf{R}^3 : q(x) = 1\}$. Thus the large integers that appear in Example \ref{example} do not arise from a nonoptimal solution of this Neumann problem---this behavior is intrinsic to the Neumann problem on ellipsoids.

The next example presents a generalized Neumann problem on the ellipsoid $\{x \in \mathbf{R}^3: 5{x_1}^2 + 3{x_2}^2 + 2{x_3}^2 = 1\}$. The input functions for this generalized Neumann problem, ${x_1}^3 {x_2}^2 x_3$ and $4{x_2}^3$, satisfy condition (a) of Theorem \ref{generalized} because both integrals in condition (a) of Theorem \ref{generalized} equal $0$ (by symmetry, because each integrand has as a factor a coordinate of $x$ raised to an odd power).
\begin{example} \label{examplegen}
Suppose $f(x_1, x_2, x_3) = {x_1}^3 {x_2}^2 x_3$, $g(x_1, x_2, x_3) = 4{x_2}^3$ and
\[
q(x) = 5{x_1}^2 + 3{x_2}^2 + 2{x_3}^2.
\]
Then the degree $6$ polynomial $h$ on $\mathbf{R}^3$ defined by
\begin{align*}
h(x_1, x_2, x_3) = &\frac{11033{x_1}^3 {x_2}^2 x_3}{806086}-\frac{4355 {x_1}^5 x_3}{3224344}-\frac{4825 {x_1}^4 x_2}{66518}-\frac{97 {x_1}^3{x_3}^3}{1612172}\\[4bp]
&-\frac{94163 {x_1}^3 x_3}{812534688}+\frac{34955 {x_1}^2{x_2}^3}{199554}-\frac{6005 {x_1}^2 x_2 {x_3}^2 }{66518}+\frac{457865 {x_1}^2 x_2}{5687289}\\[4bp]
&+\frac{716 x_1{x_3}^5}{2015215}-\frac{5437 x_1{x_2}^2 {x_3}^3}{1612172}-\frac{235273 x_1{x_3}^3}{406267344}-\frac{16629 x_1{x_2}^4 x_3}{3224344}\\[4bp]
&+\frac{564709 x_1{x_2}^2 x_3}{270844896}+\frac{1505411 x_1 x_3}{5687742816}+\frac{17278 {x_2}^5}{99777}-\frac{1049 x_2 {x_3}^4}{33259}\\[4bp]
&-\frac{1660991 {x_2}^3}{34123734}+\frac{18593 {x_2}^3 {x_3}^2}{199554}+\frac{745261 x_2 {x_3}^2}{11374578}-\frac{1621829 x_2}{34123734}
\end{align*}
satisfies the conditions $\Delta h = g$ on $\mathbf{R}^3$ and $\nabla h \cdot \nabla q = f$ on $\{x \in \mathbf{R}^3 : q(x) = 1\}$.
\end{example}

The result above is computed by using the procedure outlined by the proof of Theorem \ref{generalized} (which also requires the algorithm discussed in Section \ref{algorithm}).

A striking feature of the solution in Example \ref{examplegen} is that even though the input data contains only single-digit integers, the output includes a ten-digit integer and multiple nine-digit integers. Again, Proposition \ref{unique} implies that the polynomial $h$ given in Example \ref{examplegen} is the unique function $h$ with $h(0,0,0) = 0$ that solves this generalized Neumann problem.

For verification that the solution in Example \ref{examplegen} is correct, see the \texttt{neumann} section of the \textit{Computing with Harmonic Functions} documentation at \cite{A}.

For simplicity and clarity, we have considered in this paper only ellipsoids centered at the origin. However, the algorithm discussed in the previous section can be modified to handle ellipsoids centered at arbitrary points in $\rn$ (for such ellipsoids defined by a quadratic expression $q$, the gradient $\nabla q$ becomes slightly more complicated than considered here). The software at \cite{A} extends the algorithm discussed in this paper so that it can also handle ellipsoids not centered at the origin.

\section{Magnetic Resonance Imaging} \label{MRI}
MRI (magnetic resonance imaging) is a diagnostic tool that uses time-varying magnetic fields to produce images of anatomical structures inside the human body. The laws of physics, however, tell us that such magnetic fields will generate electric fields that can cause pain and nerve stimulation in patients undergoing imaging exams. Therefore, it is of interest to estimate the level of electric fields induced inside the imaging subject.

The Maxwell-Faraday equation in differential form is written as
\[
	\nabla \times \vect{E}\left(x,y,z,t\right) = - \frac{\partial{\vect{B}\left(x,y,z,t\right)}}{\partial{t}},
\]
which states that an applied time-varying magnetic field $\vect{B}$ will induce a spatially-varying electric field $\vect{E}$. In handling both vector fields, it is convenient to write them in terms of potentials:
\begin{equation} \label{vmp}
\vect{B} = \nabla \times \vect{A}
\end{equation}
and
\begin{equation} \label{sep}
\vect{E} = -\nabla V - \frac{\partial\vect{A}}{\partial t},
\end{equation}
where $\vect{A}$ is the vector magnetic potential and $V$ is the scalar electric potential.

The applied magnetic field $\vect{B}$ is determined by the imaging requirements such as spatial coverage and image resolution, and can be represented as a polynomial function of the spatial coordinates. From that we can specify the magnetic potential $\vect{A}$ in a polynomial form that satisfies \Ref{vmp} together with the additional constraint
\begin{equation} \label{gauge}
\nabla \cdot \vect{A}=0,
\end{equation}
which is referred to as specifying the magnetic potential using the Coulomb gauge. In order to calculate the $\vect{E}$-field from \Ref{sep}, we also need to obtain $V$, the scalar electric potential.

In MRI applications, we can assume that we are working in a quasistatic regime, which leads to the following condition:
\begin{equation} \label{continuity}
\nabla \cdot \vect{E} = 0.
\end{equation}
From this, we achieve a boundary condition that the outward normal component of the $\vect{E}$-field on the imaging subject's surface is zero:
\begin{equation} \label{enorm}
\vect{E} \cdot \n  = 0.
\end{equation}
We can now derive our working equation that we use to find $V$. Once we know what $V$ is, then it is trivial to calculate $\vect{E}$ from \Ref{sep}.

Taking the divergence of both sides of \Ref{sep} we have
\begin{align}
\nabla \cdot \vect{E} &= -\Delta V - \frac{\partial \left(\nabla \cdot \vect{A}\right)}{\partial t} \notag \\
&= -\Delta V \notag \\
& = 0, \notag
\end{align}
where the second equality comes from \Ref{gauge} and the third from \Ref{continuity}. Hence the electric potential $V$ satisfies the Laplace equation:
\begin{equation*} \label{laplaceV}
\Delta V = 0.
\end{equation*}
From \Ref{sep} and \Ref{enorm}, we also have a boundary condition for $V$ on the surface:
\begin{equation*}
\nabla V \cdot \n = -\frac{\partial\vect{A}}{\partial t} \cdot \n.
\end{equation*}
Modeling human body parts such as the torso or head as an ellipsoid (models based upon spheres have not been sufficiently accurate), the problem of calculating the induced electric field inside the body boils down to solving the Neumann problem on ellipsoids given polynomial boundary data, as discussed in this paper.

\end{document}